\numberwithin{equation}{section}
\def\ca{{\mathcal A}}
\def\cam{{\mathcal M}}
\def\cn{{\mathcal N}}
\def\he{{\mathscr E}}
\def\hf{{\mathscr F}}
\def\hg{{\mathscr G}}
\def\hh{{\mathscr H}}
\def\hk{{\mathscr K}}
\def\hl{{\mathscr L}}
\def\hn{{\mathscr N}}
\def\hr{{\mathscr R}}
\def\bc{{\mathbb C}}
\def\bn{{\mathbb N}}
\def\g{\gamma}
\def\r{\rho}
\def\s{\sigma}
\def\f{\varphi}
\theoremstyle{plain}
\newtheorem{lemma}{Lemma}[section]
\newtheorem{proposition}[lemma]{Proposition}
\newtheorem{theorem}[lemma]{Theorem}
\theoremstyle{definition}
\newtheorem{definition}[lemma]{Definition}
\theoremstyle{remark}
\begin{document}

\title[ Minimal Stinespring Representations ]{\textsc{ Minimal Stinespring Representations of  operator valued multilinear maps}}

\author[E.~Christensen]{Erik Christensen}
\address{\hskip-\parindent
Erik Christensen, Mathematics Institute, University of Copenhagen, Copenhagen, Demark.}
\email{echris@math.ku.dk}
\date{\today}
\subjclass[2010]{ Primary: 46L07, 58B34. Secondary: 47L25, 81R60.}
\keywords{C*-algebra,completely bounded, Stinespring representation, multilinear, NCG, unitarily equivalent }

\begin{abstract}

A completely positive linear map $\f$  from a C*-algebra $\ca$  into $B(\hh)$ has a Stinespring representation as $\f(a) =  X^*\pi(a)X,$ where $\pi$  is a *-representation of $\ca$ on a Hilbert space $ \hk$  and $X$  is a  bounded operator from $\hh  $ to $\hk. $  Completely bounded multilinear operators on C*-algebras as well as some densely defined multilinear operators in Connes' non commutative geometry   also  have Stinespring representations of the form  $$ \Phi(a_1, \dots, a_k ) = X_0\pi_1(a_1)X_1 \dots \pi_k(a_k)X_k$$ such that each  $a_i$ is in a *-algebra $A_i$ and  $X_0, \dots X_k $ are densely defined closed operators between the Hilbert spaces.  We show that  for both completely bounded maps and for the geometrical maps, a natural minimality assumption implies that two such Stinespring representations have unitarily equivalent *-representations in the decomposition. 
\end{abstract}

\maketitle

\section{Introduction}
Let $\f : \ca \to B(\hh)$ be a completely bounded map from a C*-algebra $\ca $ into the algebra $B(\hh)$   of bounded operators on a Hilbert $\hh.$   The articles \cite{Ha, Pa1, Wi} by respectively Haagerup, Paulsen and Wittstock all contain the result that such a map has a Stinespring representation \cite{St}, such that there exists a Hilbert space $\hk,$ a *-representation $\pi$ of $\ca$ on $\hk$ and bounded operators $X,Y$ in $B(\hh,\hk)$ such that $$\forall a \in \ca:\quad \f(a) = X^*\pi(a)Y.$$ 

The article \cite{CS1} introduced completely bounded multilinear operators from a product $\ca_1 \times \dots \times \ca_k$ of C*-algebras into $B(\hh)$ and showed  that such  a map $\Phi$ does  have a Stinespring representation of the  form $$ \Phi(a_1,\dots, a_k) = X_0\pi_1(a_1)X_1 \dots \pi_k(a_k)X_k,$$ where the operators $X_i$ are bounded and the maps $\pi_i$ are *-represen-tations. 
In these theorems there is no explicit information describing the connection between the  possible choices for the representations $\pi_i$ and the operators $X_i.$ The theorems state that there is always an optimal choice which satisfies $\|X_0\| \|X_1\| \dots \|X_k\| = \|\Phi\|_{cb},$ the completely bounded norm of $\Phi.$ 
The present article introduces a minimality criterion - which has a long history for completely positive maps - and we show that minimal Stinespring representations do have unitarily equivalent *-representations in their decomposition. 
The mathematical description of the relation between the operators $X_i$ and $Y_i$ which will appear in 2 different minimal Stinespring representations  is given  in terms of densely defined closed unbounded operators, and with this observation in mind we realized, that it is quite natural to extend the concept of {\em bounded Stinespring representations } to the unbounded case, such that we will study a multilinear operator $\Phi$ defined on a product $ A_1 \times \dots \times A_k $ of *-algebras with values in unbounded densely defined closable operators between 2 Hilbert spaces, such that the map $\Phi$  may be given in a form analogous to the Stinespring representation of completely bounded multilinear maps,
\begin{align}  &A_1, \dots , A_k \, \text{ *-algebras,}\quad  \pi_i : A_i \to B(\hk_i) \, \,\text{*-representations,} \\ \notag & X_i \text{ closed densely defined operators, }  \\ \notag & 
 \Phi(a_1, \dots ,a_k) = \mathrm{closure}\big( X_0\pi_1(a_1)X_1 \dots \pi_k(a_k)X_k).
 \end{align}

In Connes' noncommutative geometry \cite{Co} the geometrical object \newline named {\em  a compact smooth  manifold }    is replaced  by a non commutative unital *-algebra which may play the role of  the smooth functions on the manifold. This algebra may be represented as bounded operators on a Hilbert space  and the generalization of the differentiable structure to this noncommutative setting may be studied studied via some closed unbounded densely defined operators  on the same Hilbert space. Suppose for instance that we are given a spectral triple, i.e.  a *-algebra $A$ of bounded operators on a Hilbert space $H$ and a densely defined self-adjoint operator $D$ on $H$ such that all products of commutators of the form  $[D, a_1][D, a_2]\dots [D,a_k]$ are densely defined and bounded, then the $k+1$ linear forms given as $$\Phi(a_0 , a_1, \dots, a_k) := \langle a_0[D,a_1] [D, a_2] \dots [D,a_k] \xi, \xi \rangle, \text{ some } \xi \in \hh$$ play a fundamental role both in classical differential geometry and  in the noncommutative geometry,  and they all posses  unbounded Stinespring representations as we will see.  Let $K_\xi :\hh \to \bc$ be given by $\g \to \langle \g, \xi \rangle$ and $B_\xi : \bc \to \hh $ be given by $z \to z \xi$ then 
\begin{align} \Phi(a_0, \dots, a_k) = & K_\xi  a_0 \begin{pmatrix}  D &- I \end{pmatrix} \begin{pmatrix}  a_1& 0 \\ 0 & a_1 \end{pmatrix} \begin{pmatrix}  D &- I \\ D^2 & -D\end{pmatrix} \begin{pmatrix}  a_2& 0 \\ 0 & a_2 \end{pmatrix} \dots\\ \notag & \dots \begin{pmatrix}  D &- I \\ D^2 & -D\end{pmatrix} \begin{pmatrix}  a_k& 0 \\ 0 & a_k \end{pmatrix}  \begin{pmatrix}  I \\ D\end{pmatrix} B_\xi \end{align}

 Having this example in mind, we find that it is justified to extend the study of Stinespring representations to the unbounded case too. Yet another case, where unbounded Stinespring representations appear,  is the result from \cite{C} Corollary 2.5 which shows that if $\f:\ca \to B(\hh)$ is a bounded homomorphism {\em - not *-homomorphism -} of a C*-algebra and $\hh$ is a separable Hilbert space, then there exists a *-representation $\pi:\ca \to B(\hh)$ and a bounded injective  operator $X$ with densely defined  
inverse $X^{-1}$ such that $\f(a) = X^{-1}\pi(a)X,$ i.e. $\f$ has an unbounded Stinespring representation. 

The main conclusion of the present investigation  is, that both in the case of a completely bounded multilinear operator and in the case of a multilinear operator with an unbounded Stinespring representation, there exists a minimality condition which implies that any two minimal Stinespring representations of such a map will have unitarily equivalent *-representations in their decompositions. 
The operators $X_i$ and $Y_i$ used in the two Stinespring representations  will be linked in a quite natural way. 
A similar result, with the same minimality condition for completely positive  maps is presented in Paulsen's book \cite{Pa2}, Proposition 4.2.

\section{Unbounded Stinespring representations and a lemma}     
Our motivation behind the present study was to show a certain uniqueness property for Stinespring representations of a completely bounded map, but the way the proof goes and the exact statement of the results we obtain indicate, that it is worth to introduce a study of a multilinear map from a Cartesian product of *-algebras into unbounded operators between Hilbert spaces, such that the map has  an unbounded Stinespring representation. The exact meaning of this concept will be defined now. The term {\em Stinespring representation } is used in Paulsen's book \cite{Pa2} in connection with completely bounded linear or multilinear maps on products of  C*-algebras, and when we write {\em  Stinespring representation} we always are in the completely bounded setting, whereas the words {\em unbounded Stinespring representation}  does not exclude the completely bounded case.
 With respect to the usage of words, we use the word {\em representation } to mean a *-homomorphism of an associative *-algebra $A$ over $\bc$ into the bounded operators $B(\hh)$ on some Hilbert space $\hh.$ The representation is said to be non degenerate if $I_{\hh} $ is in the strong closure of $\pi(A).$ Finally we remark that for a linear operator $X$ defined on a space $\hf,$ the restriction to a subspace $\hg $ is denoted $X|\hg.$ 
 
  To those of you, who may have forgotten some basic theory on unbounded operators we remind you that a linear  operator $T: \hh \to \hk$ between the Hilbert spaces $\hh$ and $\hk, $ which is defined on a subspace dom$(T)$ of $\hh,$ has an adjoint operator $T^*: \hk \to \hh,$ if it is densely defined. The adjoint is given by \begin{align*} \xi \in \mathrm{dom}(T^*) \iff &\,\exists \omega \in \hh  \forall \eta \in \mathrm{dom}(T): \langle T\eta, \xi \rangle  = \langle \eta, \omega \rangle \\
 \text{ in this case } T^*\xi :=& \omega.
 \end{align*} We also recall that a  densely defined operator $T$ is closable if and only if $T^*$ is densely defined, and in this case $(T^*)^* = \mathrm{closure}(T).$  
 
 The product of $ST$ of  2 operators $S, T $ is defined for any $\xi $ in dom$(T)$ such that $T\xi$ is in dom$(S)$ and no closure operation is assumed, not  even if the product is bounded on a dense domain.  
 
 For a von Neumann algebra $\cam$ on a Hilbert space $\hh$ and an unbounded densely defined closed operator $T$ on $\hh$ we say that $T$ is affiliated with $\cam$ if the operators $W$ and $|T|$ in the polar decomposition, $T = W|T|, $  \cite{KR} ,  have the property that both $W$ and $(I_\hh + |T|^2)^{-1} $ belong to $\cam.$ If $T$ is affiliated with $\cam$ then the domain of definition dom$(T)$ for $T$ is left invariant by all operators in the commutant $\cam^\prime.$ We will also use the identity dom$(T) = $ dom$(|T|).$ 
\begin{definition} 
Let $k$ be a natural number, $\ca_1, \dots \ca_k$ C*-algebras,  $\hg, \hh$   Hilbert spaces and $\Phi: \ca_1 \times \dots \times  \ca_k  \to B(\hg, \hh)$ a completely bounded $k-$linear map. Let for $1 \leq i \leq k,$ $\hk_i$ be a Hilbert space, $\pi_i: \ca_i \to B(\hk_i)$ a representation and $ X_0 \in B(\hk_1, \hh), \,  X_i \in B(\hk_{(i+1)}, \hk_i),\,  X_k \in B(\hg, \hk_k)$ such that for all $(a_1, \dots , a_k) $ in $\ca_1 \times \dots \times \ca_k :$ 
$$ \Phi(a_1, \dots , a_k )\, = \, X_0\pi_1(a_1) X_1 \dots X_{(k-1)}\pi_k( a_k)X_k.$$  The set $\{\hk_1,\dots, \hk_k, \pi_1, \dots, \pi_k, X_0, \dots  X_k\}$ is called a Stinespring representation of $\Phi.$  

Let $ 1 \leq i \leq k,$ then a Stinespring representation \newline  $\{\hk_1,\dots, \hk_k,  \pi_1, \dots, \pi_k, X_0, \dots  X_k\}$ is said to be {\em minimal } at the variable $a_i$ if 
\begin{align}  
& \mathrm{span}\big(\{\pi_i(a_i)X_i \dots \pi_k(a_k)X_k\g \, : \, a_j \in A_j, \, \g \in \hg\}\big) \text{ is dense in } \hk_i  \text{ and} \\ \notag
& \mathrm{span}\big(\{\pi_i(a_i)X_{(i-1)}^* \dots \pi_1(a_1)X_0^*\eta   \, : \, a_j \in A_j, \, \eta  \in \hh\}\big) \text{ is dense in }  \hk_i.
\end{align}
We say that the Stinespring representation is minimal if it is minimal at all the variables.   
\end{definition}  

This minimality condition is not new, it is known from the beginning of the theory of completely positive maps and it fits naturally to the setting of completely bounded maps.  It reappears in \cite{PSu} where Paulsen and Suen study another uniqueness concept for completely bounded maps. It is quite easy to see that a given Stinespring representation of a completely bounded multilinear map may always be reduced to a minimal one. 

\begin{proposition} \label{MinStCb}
Let $\Phi : \ca_1 \times \dots \times  \ca_k \to B(\hg , \hh )  $ be a completely bounded $k-$linear map with a Stinespring representation  $\{\hk_1,\dots, \hk_k,\newline \pi_1, \dots, \pi_k, X_0, \dots  X_k\}. $
There exists a tuple $(l_1, \dots, l_k)$ of orthogonal projections such that $l_i$ is in the commutant $\pi(\ca_i)^\prime$ on $ \hk_i$   and \begin{align*}\{l_1\hk_1,\dots, l_k\hk_k,  & \pi_1 |\{l_1\hk_1\}, \dots,  \pi_k|\{l_k\hk_k\},\\ & X_0|\{l_1\hk_1 \} ,\dots l_iX_i|\{l_{(i+1)}\hk_{(i+1)}\}  ,\dots , l_kX_k\}
\end{align*} is a minimal Stinespring representation for $\Phi.$ 
\end{proposition}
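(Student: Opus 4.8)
The plan is to construct each $l_i$ as the orthogonal projection onto an explicit $\pi_i$-reducing subspace of $\hk_i$, and then to check the three things the statement demands: that $l_i\in\pi_i(\ca_i)'$, that the compressed family still represents $\Phi$, and that it is minimal at every variable. I would first name the two subspaces the minimality conditions refer to,
\[
\hs_i=\overline{\mathrm{span}}\{\pi_i(a_i)X_i\cdots\pi_k(a_k)X_k\g:a_j\in\ca_j,\ \g\in\hg\},
\]
\[
\hht_i=\overline{\mathrm{span}}\{\pi_i(a_i)X_{(i-1)}^*\cdots\pi_1(a_1)X_0^*\eta:a_j\in\ca_j,\ \eta\in\hh\}.
\]
Each is invariant under $\pi_i(\ca_i)$, since left multiplication by $\pi_i(b)$ only replaces $a_i$ by $ba_i$; hence both are reducing. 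Minimality at $a_i$ requires $\hk_i=\hs_i$ \emph{and} $\hk_i=\hht_i$ at once, so I must shrink $\hk_i$ to a space in which both families are total. I would take $l_i$ to be the projection onto the ``visible part of the source'',
\[
\hv_i:=\hs_i\ominus(\hs_i\cap\hht_i^\perp)=\overline{p_i\hht_i},
\]
where $p_i$ is the projection onto $\hs_i$; as an orthogonal difference of reducing subspaces $\hv_i$ is reducing, so $l_i\in\pi_i(\ca_i)'$.

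The computations rest on three elementary facts I would record first: (i) if $\zeta\in\hs_{i+1}$ then $\pi_i(a_i)X_i\zeta\in\hs_i$; (ii) if $w\in\hht_{i+1}^\perp$ then $X_i\pi_{i+1}(a_{i+1})w\in\hht_i^\perp$; and (iii) $\hht_i^\perp=\{v:X_0\pi_1(b_1)X_1\cdots X_{(i-1)}\pi_i(b_i)v=0\ \forall b_\bullet\}$. With these, $\Phi$-preservation follows by inserting the factors $l_j$ into $X_0\pi_1(a_1)X_1\cdots\pi_k(a_k)X_k\g$ one at a time, from the right, keeping the left chain clean. At each insertion the error from $l_j-1=-\big((1-p_j)+(p_j-l_j)\big)$ splits into a piece annihilated by $1-p_j$, because the relevant evaluation vector already lies in $\hs_j$, and a piece lying in $\hs_j\cap\hht_j^\perp\subseteq\hht_j^\perp$, which is annihilated by the still-clean left chain $X_0\pi_1(a_1)\cdots\pi_j(a_j)$ by (iii). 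After all insertions the inner projections absorb into the compressed operators $X_0l_1,\ l_iX_il_{(i+1)},\ l_kX_k$, so the compressed family reproduces $\Phi$.

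For minimality I would prove the source condition by downward induction on $i$ and the target condition by upward induction. In the downward step the compressed source vectors are $\xi_i=\pi_i(a_i)l_iX_i\cdots l_kX_k\g$, and one verifies $\overline{\mathrm{span}}\{\xi_i\}=l_i\hs_i=\hv_i$; the only nontrivial point is the cross-term $\pi_i(a_i)l_iX_i(p_{(i+1)}-l_{(i+1)})\zeta$, which equals $l_i\pi_i(a_i)X_i\pi_{i+1}(a_{i+1})w'$ with $w'\in\hht_{i+1}^\perp$ and therefore, by (ii) and (i), lands in $\hs_i\cap\hht_i^\perp$ and is killed by $l_i$. The target condition is symmetric, the one extra ingredient being that vectors of $\hs_i$ lie in the essential subspace of $\pi_i$, so an approximate unit lets one delete an inserted $l_1$ from a chain $X_0l_1\pi_1(a_1)\cdots$. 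I expect the genuine obstacle to be exactly this coupling of the two density conditions across neighbouring variables: compressing $\hk_{(i+1)}$ perturbs the source vectors seen in $\hk_i$, so the naive ``first cut to $\hs_i$, then cut to $\hht_i$'' fails; the whole argument turns on choosing $\hv_i=\hs_i\ominus(\hs_i\cap\hht_i^\perp)$ precisely so that every cross-term produced by the neighbouring compressions falls into $\hs_i\cap\hht_i^\perp$ and is annihilated by $l_i$.
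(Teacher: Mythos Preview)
Your approach is correct and genuinely different from the paper's. The paper proceeds \emph{iteratively}: it first forms $r_k$ (the projection onto your $\hs_k$) and then $l_k$ (the projection onto $\overline{r_k\hht_k}$, which is exactly your $\hv_k$), passes to the reduced Stinespring representation on $l_k\hk_k$, and then repeats the whole construction for the variable $a_{k-1}$, and so on down to $a_1$; the only nontrivial point in the paper's argument is the check that the cut at level $k-1$ does not destroy the minimality already obtained at level $k$, which is handled by a short density argument. You instead write down all the projections $l_i$ \emph{simultaneously} from the original data and verify minimality globally through your cross-term analysis. Your route is more explicit and avoids the step-by-step bookkeeping; the paper's route is more elementary at each individual stage and sidesteps the need to control cross-terms between non-adjacent levels. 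The two procedures need not produce the same projections $l_i$ for $i<k$, but both yield minimal representations.

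One caution: your assertion that the target-density argument is ``symmetric'' to the source one is not quite right, since $\hv_i=\hs_i\ominus(\hs_i\cap\hht_i^\perp)=\overline{p_i\hht_i}$ is visibly asymmetric in $\hs_i$ and $\hht_i$. The target density does hold, but the clean way to see it is this: if $v\in\hv_i$ is orthogonal to all compressed target vectors at level $i$, then by the inductive hypothesis $l_{i-1}X_{i-1}\pi_i(a_i)v=0$ for every $a_i$; since $v\in\hs_i$ and (after the harmless preliminary reduction to non-degenerate $\pi_{i-1}$, or directly via an approximate unit) $X_{i-1}\hs_i\subseteq\hs_{i-1}$, this forces $X_{i-1}\pi_i(a_i)v\in\hs_{i-1}\cap\hv_{i-1}^\perp=\hs_{i-1}\cap\hht_{i-1}^\perp\subseteq\hht_{i-1}^\perp$ for all $a_i$, hence $v\in\hht_i^\perp$, hence $v\in\hv_i\cap\hht_i^\perp=\{0\}$. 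This is the argument your ``extra ingredient'' is gesturing at, and it deserves to be written out rather than labelled symmetric.
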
  

\begin{proof}
We start from the right  and let $r_k$ denote the orthogonal projection in the commutant $\pi_k(\ca_k)^\prime $ with range given as the closed linear subspace of $\hk_k$ spanned by the set $\{\pi_k(a_k)X_k\g \, : \, a_k \in \ca_k,\, \g \in \hg\}.$ Then we let $l_k $ denote the orthogonal projection in $\pi_k(\ca_k)^\prime $ with range given as the closed linear span of all the vectors in the set $\{r_k\pi_k(a_k)X_{(k-1)}^*  \dots \pi_1(a_1)X_0^* \eta   \, : \, (a_1, \dots , a_k) \in \ca_1 \times \dots \times \ca_k , \, \eta \in \hh\} .$ Then we see 
\begin{align*}
\forall (a_1, \dots,  a_k) :  
\Phi(a_1, \dots , a_k) & =  X_0  \pi_1(a_1) X_1 \dots \pi_k(a_k)l_kX_k \\& \notag = \, X_0   \pi_1(a_1)X_1 \dots  X_{(k-1)}l_k \pi_k(a_k)l_k X_k. 
\end{align*}
 So if we replace $\hk_k$ by $l_k\hk_k,$ $X_{(k-1)}$  by $X_{(k-1)} l_k$ and $ X_k$ by $l_kX_k$ we have  a Stinespring representation of $\Phi$ which is minimal at the last variable place. We may then repeat the process for the variable $a_{(k-1)},$ and then obtain an orthogonal projection $l_{(k-1)} $ such that  
\begin{align} \label{k-1}
\Phi(a_1, \dots , a_k) \,& = \, X_0  \pi_1(a_1) X_1 \dots \pi_{(k-1)}(a_{(k-1)})l_{(k-1)}X_{(k-1)}\pi_k(a_k)l_kX_k . 
\end{align} We can then replace $\hk_{(k-1)}$ by $l_{(k-1)} \hk_{(k-1)}, $   $X_{(k-1)}$ by $l_{(k-1)}X_{(k-1)}l_k$ and  $X_{(k-2)}$ by $X_{(k-2)}l_{(k-1)} $ to  obtain yet another Stinespring representation of $\Phi.$ By construction it is minimal at the $(k-1)$'st variable, but we have to see that the minimality at the $k$'th variable is still in place. The linear span of the set $\{l_k\pi_k(a_k)\g \, : \, a_k \in \ca_k, \g \in \hg\} $ is left unchanged when reducing  from $\hk_{(k-1)}$ to $l_{(k-1)}\hk_{(k-1)}$ so it is dense in $l_k\hk_k.$ This density implies  that from the equation (\ref{k-1}) we can conclude that 
\begin{align} \label{k-1,1} 
&  X_0  \pi_1(a_1)X_1 \dots \pi_{(k-1)}(a_{(k-1)}) l_{(k-1)}X_{(k-1)}l_k \pi_k(a_k)  \\ \notag & = X_0  \pi_1(a_1) X_1 \dots \pi_{(k-1)}(a_{(k-1)}) X_{(k-1)}l_k \pi_k(a_k)  
\end{align} and then the closed linear span of the set $$\{\pi_k(a_k) l_kX_{(k-1)}^*l_{(k-1)}  \dots \pi_1(a_1)X_0^* \eta \, :\, a_j \in \ca_j, \eta \in \hh\}$$ equals $l_k\hk_k$ so the minimality at $a_k$ is preserved. This procedure may now be repeated and the proposition follows.

\end{proof}

As mentionened above our motivation for this work was to see how different Stinespring representations of the same completely bounded map relate to each other.
After we obtained the result which is  presented as Theorem \ref{theorem1}, we  realized that it might be extended to several variables and also that the concept we name {\em unbounded Stinespring representation } was asking for a further investigation. When we realized that this new concept has links to noncommutative geometry and to the similarity question we found it reasonable to pursue the study. The definitions we propose below are made, such that the results we want are obtainable in the setting of densely defined closed operators, but it might be that we have not found the optimal set of conditions.

\begin{definition} Let $\hg,\,  \hh $ be Hilbert spaces and $\he $ a linear subspace of $\hg.$ The space $OP(\he, \hh)$ is defined as the space of all linear operators - defined on all of $\he $ - to  $\hh.$ 
\end{definition} 

\begin{definition} 
Let $k$ be a natural number, $A_1, \dots A_k$ be associative  *-algebras over $\bc,$  $\hg, \hh$  Hilbert spaces with dense subspaces $\he $ and $\hf$ respectively and $\Phi : A_1 \times \dots \times A_k \to  OP(\he, \hh)$ a $k$-linear map. Let for $i \in \{1,, \dots k\} , \, \hk_i $ be Hilbert spaces, $\pi_i : A_i \to B(\hk_i)$ be representations, $X_0 $ a closed densely defined operator from $\hk_1 \to \hh, $ for $1 \leq i \leq k-1 $ $X_i $ a closed densely defined operator from $\hk_{(i+1)} $ to $\hk_i,$  $X_k $ a closed densely defined operator from $\hg $ to $\hk_k,$ and $\he$ a dense subspace of  $\hg$ such that for any $k-$tuple $(a _1, \dots ,a_k) $ in $A_1\times \dots \times A_k:$ 
\begin{align*}
X_0\pi_1(a_1)X_1 \dots \pi_k(a_k)X_k & \text{ is  defined and equals } \Phi(a_1, \dots, a_k) \text{ on }\he.
\end{align*}
Let $\hf$ be a dense subspace of $\hh$ we  say that the set $$( \hk_1, \dots, \hk_k, \pi_1, \dots \pi_k, X_0, \dots, X_k, \he, \hf )$$ is  an unbounded Stinespring representation of $\Phi$ if, in addition, 
\begin{equation} \forall (a_1 , \dots, a_k): \, 
 X_k^*\pi_k(a_k^*)X_{(k-1)}^*  \dots \pi_1(a_1^*)X_0^* \in OP(\hf, \hg).
 \end{equation} 
We say that the unbounded Stinespring representation is minimal if for each $i$ in $\{1, \dots ,k\}$ we have 
 \begin{align}  \label{MinCond}
 &\mathrm{span}\big(\{\pi_i(a_i)X_i \dots \pi_k(a_k)X_k\g \, : \, a_j \in A_j, \, \g \in \he\}\big) \text{ is dense in } \hk_i  \text{ and }\\
 \notag &\mathrm{span}\big(\{\pi_i(a_i)X_{(i-1)}^* \dots \pi_1(a_1)X_0^*\eta  \, : \, a_j \in A_j, \, \eta \in \hf\}\big) \text{ is dense in } \hk_i
\end{align}
\end{definition}

In the introduction above we mentioned two examples of unbounded Stinespring representations and it is known from classical - {\em commutative } - differential geometry that the classical  $(k+1)$ linear forms from differential geometry  will almost never be completely bounded. 

 The example we gave of an unbounded Stinespring representation for a bounded homomorphism of a C*-algebra into some $B(\hh)$ may or may not be completely bounded. This is - in to days language -  what Kadison's similarity question asks. 
 
 If you think that the spaces $\he$ and $\hf  $ are a bit superficial or {\em ad hoc} objects, we would like to remind you that the space of bounded smooth functions with compact support or the space of rapidly decreasing functions, known from the commutative theory, quite often appear in the roles played by $\he$ and  $\hf.$

With respect to an extension of Proposition \ref{MinStCb} to the setting of unbounded Stinespring representations, we do not know if such a result is valid in general for the multilinear case, but below we show that in the one variable case and when the operators in the image of $\Phi$ are bounded densely defined operators, the proof from the completely bounded case, just given, may be modified to work in the unbounded case too. On the other hand we think, that in the case of a multilinear operator based on a geometrical setting, there may quite often be extra information available which can be used to reduce a given  unbounded Stinespring representation  to a minimal one.

 The problems, we have faced when trying to modify the proof of Proposition \ref{MinStCb} to the unbounded case are many, and some of them  are based on the fact that, for several reasons, we insist that the operators $X_i$ which appear in an unbounded Stinespring representation have to be densely defined and closed. Unfortunately the product $bX$ of a bounded operator $b$ and an unbounded closed densely defined operator $X$ need not to be closable, and such an operation is applied several times in the proof of Proposition \ref{MinStCb}. 

\begin{proposition} \label{MinStUb} 
Let $A$ be a *-algebra, $\hg, \hh $ Hilbert spaces, $\f : A \to B(\hg, \hh)$   a linear map with an unbounded Stinespring representation $(\hk, \pi , X_0, X_1, \he, \hf).$ Then $\f$ has a minimal unbounded Stinespring representation.
\end{proposition}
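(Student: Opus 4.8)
The plan is to imitate the two–step compression in the proof of Proposition \ref{MinStCb}, but to \emph{define} the compressed operators abstractly on the dense domains forced by the minimality conditions, rather than as the (possibly non-closable) products $lX_1$ or restrictions $X_0|l\hk$.

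First I would produce the two projections. Let $r\in\pi(A)^\prime$ be the orthogonal projection onto $\cn_1:=\ovl{\mathrm{span}}\{\pi(a)X_1\g:a\in A,\ \g\in\he\}$ and let $l\in\pi(A)^\prime$ be the projection onto $\cn_2:=\ovl{\mathrm{span}}\{r\pi(a)X_0^*\eta:a\in A,\ \eta\in\hf\}$; both generating sets are $\pi(A)$–invariant, and since the generators of $\cn_2$ lie in $\cn_1$ we have $l\le r$, hence $lr=rl=l$. Put $\hk_{\min}:=l\hk$ and $\pi_{\min}:=\pi|\{l\hk\}$. Two facts from the definition of an unbounded Stinespring representation will be used repeatedly: $\pi(a)X_1\g\in\dom(X_0)$ for $\g\in\he$ (since $X_0\pi(a)X_1\g=\f(a)\g$ is defined), and $\pi(a)X_0^*\eta\in\dom(X_1^*)$ for $\eta\in\hf$ (the adjoint condition). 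I also record that $rX_0^*\eta\in\cn_2=l\hk$ for $\eta\in\hf$, which follows by applying $r$ to $\pi(e_\l)X_0^*\eta\to X_0^*\eta$ along an approximate unit $e_\l$ (or by taking $a$ to be the unit when $A$ is unital); in particular $rX_0^*\eta=lX_0^*\eta$.

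Next I would define the operators. On the dense subspace $D_0:=\mathrm{span}\{l\pi(a)X_1\g\}\subseteq l\hk$ set $Y_0(l\pi(a)X_1\g):=\f(a)\g$. Well-definedness is the crucial computation: if $u:=\sum_i\pi(a_i)X_1\g_i$ satisfies $lu=0$, then $u\in(r-l)\hk$, so for every $\eta\in\hf$ one has $\langle X_0u,\eta\rangle=\langle u,(r-l)X_0^*\eta\rangle=0$ because $rX_0^*\eta=lX_0^*\eta$; hence $\sum_i\f(a_i)\g_i=X_0u=0$. The same pairing shows $\hf\subseteq\dom((Y_0)^*)$ with $(Y_0)^*\eta=lX_0^*\eta$, so $Y_0$ is closable, and I take $Y_0$ to be its closure, a closed densely defined operator $l\hk\to\hh$. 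Symmetrically, on $D_1:=\mathrm{span}\{l\pi(a)X_0^*\eta\}$ I would define $S:l\hk\to\hg$ by $S(l\pi(a)X_0^*\eta):=X_1^*\pi(a)X_0^*\eta$, prove well-definedness by the mirror argument (now using $\pi(a)X_0^*\eta\in\dom(X_1^*)$ and the implication $lu^\prime=0\Rightarrow ru^\prime=0$ for $u^\prime=\sum_i\pi(a_i)X_0^*\eta_i$, which forces $X_1^*u^\prime=0$), and then set $Y_1:=(\ovl{S})^*$, a closed densely defined operator $\hg\to l\hk$ satisfying $Y_1\g=lX_1\g$ for $\g\in\he$.

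Finally I would verify the three requirements. The Stinespring identity holds on $\he$ since $\pi_{\min}(a)Y_1\g=l\pi(a)X_1\g\in D_0$ and $Y_0(l\pi(a)X_1\g)=\f(a)\g$. The adjoint condition holds because $\pi_{\min}(a^*)(Y_0)^*\eta=l\pi(a^*)X_0^*\eta\in D_1\subseteq\dom((Y_1)^*)$ with $(Y_1)^*=\ovl{S}$ sending it to $X_1^*\pi(a^*)X_0^*\eta\in\hg$. Minimality at the single variable is immediate from the construction: $\mathrm{span}\{\pi_{\min}(a)Y_1\g\}=D_0$ and $\mathrm{span}\{\pi_{\min}(a)(Y_0)^*\eta\}=D_1$ are dense in $l\hk$, using $l\le r$ and the density of the defining generators. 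The main obstacle, and the reason this is more than a transcription of Proposition \ref{MinStCb}, is precisely that bounded–times–closed products such as $lX_1$ need not be closable; the whole scheme is arranged so that each $Y_i$ is manufactured as a closure (respectively an adjoint of a closure) of an operator given by an explicit formula on a dense domain, and the only thing one must genuinely check is the well-definedness of those formulas, which is where the domain memberships above and the relation $l\le r$ do the work.
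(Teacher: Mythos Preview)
Your overall architecture is the paper's: produce the projections $r\ge l$ in $\pi(A)^\prime$, pass to $l\hk$, and manufacture the two new operators as closures/adjoints of maps defined on the obvious dense subspaces. The paper takes $\hat X_1:=\overline{lX_1|\he}$ and $\hat X_0:=(lX_0^*|\hf)^*$ and verifies closability via the key pairing identity
\[
\langle \pi(a)X_1\g,\,X_0^*\eta\rangle \;=\; \langle l\pi(a)X_1\g,\,lX_0^*\eta\rangle,
\]
which is exactly what makes your $Y_0$ well-defined and your $(Y_0)^*\eta=lX_0^*\eta$. So the two routes coincide at the only nontrivial step.

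There is, however, one genuine gap. You assert $rX_0^*\eta=lX_0^*\eta$ by passing an approximate unit $e_\lambda$ through $\pi$, but $A$ here is an arbitrary $*$-algebra and $\pi$ is not assumed nondegenerate; neither an approximate unit nor $\pi(e_\lambda)\to I$ is available. In fact $rX_0^*\eta-lX_0^*\eta$ can be a nonzero vector on which $\pi(A)$ vanishes. What \emph{is} always true is $l\pi(b)X_0^*\eta=r\pi(b)X_0^*\eta$ for every $b\in A$ (since $r\pi(b)X_0^*\eta\in l\hk$ by construction and $l\le r$), so the vector $w:=(r-l)X_0^*\eta$ satisfies $\pi(b)w=0$ for all $b$. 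This suffices for your well-definedness: if $u=\sum_i\pi(a_i)X_1\g_i$ with $lu=0$, then
\[
\langle X_0u,\eta\rangle=\langle u,X_0^*\eta\rangle=\langle (r-l)u,X_0^*\eta\rangle=\langle u,w\rangle=\sum_i\langle X_1\g_i,\pi(a_i^*)w\rangle=0,
\]
hence $X_0u=0$ since $\hf$ is dense. Replace your approximate-unit line with this computation (or, equivalently, define $Y_0$ directly as $(lX_0^*|\hf)^*$ as the paper does, which makes well-definedness automatic), and the rest of your argument goes through; your mirror step $lu'=0\Rightarrow ru'=0\Rightarrow X_1^*u'=0$ already uses the correct idea and does not rely on the faulty claim.
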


\begin{proof}
The proof follows that of Proposition \ref{MinStCb}, so let $r$ be the orthogonal projection in the commutant $\pi(A)^\prime$ which maps onto the closed linear span of the set $\{\pi(a)X_1\g\, :\, a \in A, \g \in \he\}, $ and let $l $ denote the orthogonal projection in $\pi(A)^\prime$ which maps onto the closed linear span of the set $\{\pi(a)rX^*_0\eta\, : \, a \in A, \eta \in \hh\}.$ Then for any $a$ in $A,$ any  $\g$ in $\hg$ and any $\eta \in \hh$ we have \begin{equation} \label{CloX}   \langle \pi(a)X_1\g, X^*_0\eta \rangle \, = \, \langle  \pi(a)l X_1\g, lX^*_0\eta \rangle, \end{equation} and the set $\big( l\hk, \pi|l\hk, (lX^*_0|\hf)^*, \mathrm{closure}(lX_1|\he)\big)$ is a candidate for a minimal unbounded Stinespring representation, except that we have to show that $(lX^*_0 )^*$ is densely defined and that $lX_1$ is closable. Before we show that the candidates do work, we remark that the minimality conditions to the left and right are fulfilled by the construction. 
We start by showing that $lX_1|\he$ is closable, so we let $(\g_n)$ denote a sequence from $\he$ and $\xi$ a vector in $\hk$ such that the sequence $(\g_n,lX_1 \g_n)$ converges in norm to $(0,\xi).$ We will then have to show that $\xi = 0,$ in order to show the closability.    By (\ref{CloX}) we have 
\begin{align*} \forall a \in A, \, \forall \eta \in \hf
\langle \xi, l\pi(a)X^*_0\eta \rangle &= \lim \langle lX_1\g_n, l\pi(a)X^*_0\eta \rangle \\
&= \lim \langle \f(a) \g_n, \eta \rangle \\&= 0.  
\end{align*}

Then $\xi $ in $l\hk$ is orthogonal to the vectors in a dense subspace of $l\hk$ so  $\xi = 0 $ and $lX_1|\he$ is closable. We will then  define  $\hat X_1 := \mathrm{closure}(lX_1|\he).$ 
The operator $l X^*_0|\hf$ may be shown to be closable in the same way as we just showed that $lX_1 |\he$ is closable. Since $lX^*_0|\hf$ is densely defined, it has an adjoint operator $(lX_0^*|\hf)^*$ and this operator is densely defined since $lX_0^*|\hf$ is closable.
 We can then define $\hat X_0$ as the closed densely defined operator $ \hat
X_0 := (X_0^*|\hf)^*.$ We have a candidate for the minimal unbounded Stinespring representation, but we still need to see that $\f(a)|\he = \hat X_0 \pi(a)l \hat X_1|\he,$ so we start computing.
   
\begin{align*} \forall a \in A, \, \forall \g \in \he, \,  \forall \eta \in \hf: \, 
\langle \pi(a)lX_1 \g, lX^*_0\eta \rangle  &= \langle \f(a) \g, \eta \rangle .\\   
\end{align*}
This means that $\pi(a)lX_1\g $ is in dom$\big((lX^*_0|\he)^*\big)$ and $$(lX_0^*|\he)^*\pi(a)lX_1 \g   = \f(a)\g,$$  so $\hat X_0 \pi(a) l \hat X_1 |\he \, = \, \f(a)|\he $ and the set $(l\hk, \pi| l\hk, \hat X_0, \hat X_1, \he, \hf)$ is a minimal unbounded Stinespring representation for $\f,$ as claimed. 
\end{proof}
In \cite{PSu} Paulsen and Suen study Stinespring representations of a completely bounded map $\f$ of the form $\{K, \pi, V^*, TV\},$ such that  $T$ is a bounded operator on $K$ in the commutant $\pi(\ca)^\prime.$ They show that  two minimal  Stinespring representations  of this sort  do have unitarily equivalent *-representations in their Stinespring representations.  

We will extend this result to unbounded Stinespring representations of multilinear maps.  The multilinear case is proved via an induction argument. The induction step and the one variable case is to a large extent proven as an application of the lemma we present below.

\begin{lemma} \label{lemma}
Let $A$ be an associative *-algebra over $\bc,$ $\hg, \hk, \hl$
 Hil-bert spaces,  $\pi: A \to B(\hk),$ $\r: A \to B(\hl)$ non degenerate *-represen-tations of $A$ and $\s := \pi \oplus \r$ the direct sum representation of $A$ on $\hk \oplus \hl.$ Let $X : \hg\to \hk $ and $Y: \hg \to \hl$ be  densely defined closed operators, which both have a dense subspace $\he$ of $\hg $ in their domain of definition. Let $N$ be a projection in the commutant $\s(A)^\prime$ such that all  the vectors $\{(\pi(a) X\g, \r(a)Y \g)\, : \, a \in A, \g \in \he\}$ are in the space $\hn:= N(\hk \oplus \hl).$    Let $K$ denote the orthogonal projection from $\hk \oplus \hl  $ onto $\hk.$ 
If \begin{equation} \label{generic} N\wedge K = N\wedge (I-K) = (I-N)\wedge  K  = (I-N) \wedge (I-K) = 0 \end{equation}  then there exists a closed densely defined operator $T : \hk  \to \hl, $  such that for the polar decomposition $T = W|T| :$
\begin{itemize}
\item[(i)] $\hn = N(\hk \oplus \hl) = G(T),$ the graph of $T,$ \item[(ii)] $\forall \g \in \mathrm{dom}(T)\, \forall a \in A: \, \pi(a)\g \in \mathrm{dom}(T)$ and $T\pi(a)\g = \r(a)T\g,$ 
\item[(iii)] the isometry $W,$  in the polar decomposition $T = W|T|,$ maps $\hk$ onto $\hh$  such that $ \forall a \in A: \, \, W\pi(a)W^* = \r(a),$ and the positive operator $|T| $ is affiliated with the commutant $\pi(A)^\prime,$ 
\item[(iv)] $ \forall \g \in \he: \, X\g \in \mathrm{dom}(T) \text{ and } TX\g = Y\g.$ 
\end{itemize} 
  \end{lemma}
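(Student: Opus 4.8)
The plan is to read the four orthogonality relations in \eqref{generic} as a graph criterion: they say precisely that $\hn$ is the graph of a closed, densely defined, injective operator $T$ with dense range, and then the $\s(A)$-invariance of $\hn$ delivers the intertwining statements (ii)--(iv) almost formally. Since $N \in \s(A)^\prime$, both $\hn$ and $\hn^\perp = (I-N)(\hk\oplus\hl)$ are closed $\s$-invariant subspaces. Identifying $\hk$ with $K(\hk\oplus\hl)$ and $\hl$ with $(I-K)(\hk\oplus\hl)$, I set
\[
\dom(T) := \{u \in \hk : (u,v) \in \hn \text{ for some } v \in \hl\},
\]
and I would check that each condition in \eqref{generic} is exactly one operator-theoretic property: $N\wedge(I-K)=0$ means $\hn \cap (0\oplus\hl)=0$, so the assignment $u\mapsto v$ is single-valued and $\hn = G(T)$; $(I-N)\wedge K=0$ means the orthocomplement of $\dom(T)$ inside $\hk$, which equals $\hn^\perp\cap(\hk\oplus 0)$, is zero, so $T$ is densely defined; $N\wedge K=0$ means $\hn\cap(\hk\oplus 0)=0$, i.e. $\ker T = 0$; and $(I-N)\wedge(I-K)=0$ means $\hn^\perp\cap(0\oplus\hl)=0$, i.e. $\overline{\mathrm{Range}(T)}=\hl$. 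As $\hn$ is closed, $T$ is closed. This proves (i) and records that $T$ is closed, densely defined, injective, with dense range.

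For (ii), a vector $\g\in\dom(T)$ means $(\g,T\g)\in\hn$; applying $\s(a)=\pi(a)\oplus\r(a)$, which preserves $\hn$, gives $(\pi(a)\g,\r(a)T\g)\in\hn=G(T)$, so $\pi(a)\g\in\dom(T)$ and $T\pi(a)\g=\r(a)T\g$. For (iv), the hypothesis is that $\s(a)(X\g,Y\g)\in\hn$ for all $a\in A,\ \g\in\he$. Decomposing $(X\g,Y\g)=N(X\g,Y\g)+(I-N)(X\g,Y\g)$ and using that $\hn^\perp$ is $\s$-invariant, the summand $(I-N)(X\g,Y\g)=:(u,v)\in\hn^\perp$ must satisfy $\s(a)(u,v)=0$, i.e. $\pi(a)u=0$ and $\r(a)v=0$ for every $a$; non-degeneracy of $\pi$ and $\r$ (triviality of the common kernel) forces $u=v=0$, so $(X\g,Y\g)\in\hn=G(T)$, whence $X\g\in\dom(T)$ and $TX\g=Y\g$.

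For (iii), I would run the standard unbounded polar-decomposition argument. Taking adjoints in the relation $T\pi(a)\subseteq\r(a)T$ of (ii) gives $T^*\r(a)\subseteq\pi(a)T^*$, and combining the two shows that $T^*T$ commutes with each $\pi(a)$ on its domain; hence $|T|=(T^*T)^{1/2}$ commutes with $\pi(A)$, and in particular $(I+|T|^2)^{-1}=(I+T^*T)^{-1}\in\pi(A)^\prime$, which together with the phase of the positive operator $|T|$ being the identity (as $T$, hence $|T|$, is injective) is exactly the affiliation of $|T|$ with $\pi(A)^\prime$. Since $\ker T=\ker|T|=0$ and $\overline{\mathrm{Range}(T)}=\hl$, the partial isometry $W$ has initial space $\hk$ and final space $\hl$, so $W:\hk\to\hl$ is unitary. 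Writing $T\g=W|T|\g$ and using $|T|\pi(a)=\pi(a)|T|$, the identity $T\pi(a)\g=\r(a)T\g$ becomes $W\pi(a)|T|\g=\r(a)W|T|\g$; as $\mathrm{Range}(|T|)$ is dense and $W,\pi(a),\r(a)$ are bounded, this gives $W\pi(a)=\r(a)W$, i.e. $W\pi(a)W^*=\r(a)$.

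The step I expect to be the main obstacle is the domain bookkeeping in the unbounded setting: translating the lattice conditions \eqref{generic} into the stated properties of $T$ must be done with care because $T$ is unbounded, and the polar-decomposition intertwining requires first checking that $\dom(T^*)$, $\dom(T^*T)$ and $\dom(|T|)$ are invariant under the representations before the commutation of $|T|$ with $\pi(A)$ can be asserted. Once the $\s(A)$-invariance of the graph is in hand, everything else is a clean consequence.
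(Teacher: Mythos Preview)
Your plan is essentially the paper's: read the generic-position relations \eqref{generic} as saying $\hn$ is the graph of a closed, densely defined, injective $T$ with dense range, and then deduce (ii)--(iv) from the $\s(A)$-invariance of $\hn$. The paper packages your step (i) as a citation of Halmos' two-subspaces theorem \cite{Hal}, which is exactly what you unpack by hand. Your argument for (iv) is a clean variant of the paper's: the paper applies Kaplansky's density theorem to a bounded approximate identity in $\pi(A)$ and uses the closedness of $T$, whereas you project $(X\g,Y\g)$ onto $\hn^\perp$ and kill the residual with non-degeneracy; both work.

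The one place to tighten is (iii). The inclusion you wrote, $T^*\r(a)\subseteq\pi(a)T^*$, points the wrong way, and ``taking adjoints'' of $T\pi(a)\subseteq\r(a)T$ does not literally yield either inclusion because $(T\pi(a))^*\supseteq\pi(a^*)T^*$ only gives a one-sided bound. The right statement, $\pi(a)T^*\subseteq T^*\r(a)$, follows cleanly from the $\s(A)$-invariance of $\hn^\perp=\{(-T^*v,v):v\in\dom(T^*)\}$, after which your $T^*T$ argument goes through. The paper sidesteps this entire domain chase: from Halmos one has that the $(1,1)$-entry of the matrix of $N$ equals $(I_\hk+T^*T)^{-1}$, and since $N\in\s(A)'$ this entry lies in $\pi(A)'$ immediately, so $|T|$ is affiliated with $\pi(A)'$ without ever touching $\dom(T^*T)$. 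That shortcut is the main thing the paper's route buys over yours.
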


\begin{proof}
We will work with operators on the direct sum Hilbert space $\hk \oplus \hl$ and write them as matrices, when this notation is the easiest to use. The direct sum representation $\s := \pi \oplus \r $ may then be written in the form $$ \s(a) = \begin{pmatrix}
\pi(a) & 0 \\ 0 &\r(a)
\end{pmatrix}.$$ 

We will apply Halmos' result \cite{Hal} describing the relations between two closed subspaces of a Hilbert space  to the 2 closed subspaces $\hk$ and $\hn$ of $\hk  \oplus \hl.$  By assumption the two subspaces are in the so-called generic position to each  other and  by Halmos' result \cite{Hal} there exists a closed densely defined operator $T$ from $\hk$ to $\hl$ such that $\hn = G(T),$ the graph of $T,$ and $T$ is injective with dense range. 
The fact that $N$ is in the commutant of $\s(A)$  means that the closed subspace $\hn = G(T)$ is invariant for all  the operators $\s(a),$ so \begin{equation} \label{Ttwin}
\forall a \in A\, \forall \xi \in \text{dom}(T):\quad  \pi(a)\xi \in \text{dom}(T), \text{ and } T\pi(a)\xi = \r(a) T \xi.
\end{equation}

It follows that for the polar decomposition $T = W|T|,$ which in the unbounded case is presented in  \cite{KR} 6.1.11,  the positive operator  $|T|$ on $\hk$ is affiliated with the commutant $\pi(A)^\prime$ and $W$ is an isometry of $\hk$ onto $\hl$ such that
\begin{equation} \label{WunEq}
\forall a \in A; \quad \r(a) = W\pi(a)W^*.
\end{equation}
These consequences may be seen from the fact that the positive operator, say $N_{11},$ being the upper left hand element in the matrix for the projection $N$ is  in $B(\hk)$  and  commutes with all the operators in $\pi(A)$. It follows from \cite{Hal} that  $N_{11} = (I_\hk + T^*T)^{-1},$ so we get that $|T| := (T^*T)^{\frac{1}{2}}$is affiliated to $\pi(\ca)^\prime,$ and the domain of definition for $T$ is left invariant by all the operators $\pi(a).$  In the polar decomposition $W|T| := T, $  the isometry part $W$ must map $\hk$ isommetrically onto $\hl$ since $T$ is injective and the range of $T$ is dense. Hence 
\begin{align} \notag
\forall a \in A: \quad \r(a)W|T| & =W| T| \pi(a) = W\pi(a)|T| \\ \notag \forall a \in A: \quad W^* \r(a)W & =\pi(a).
\end{align}
By assumption we know that for $a$ in $A$ and $\g $ in $\he$  we have $\pi(a)X\g$ in dom$(T)$ and $T\pi(a)X \g = Y\r(a)\g,$ and the statement (iv) of the lemma follows when $A$ is unital. In the non unital case we know by Kaplansky's density theorem and the assumption on non degenerate representations $\pi, \r$  that there exists a net of contractions $(\pi(a_i) )_{(i \in J)}$ in $\pi(A)$ which converges strongly to $I_\hk.$ Since $\r(a) = W\pi(a)W^*$ the net $( \r(a_i))_{(i \in J)}$ converges strongly to $I_\hl$ and we get for any $\g $ in $\he$ that 
$$  \lim(\pi(a_i)X\g, T\pi(a_i)X\g) = \lim ( \pi(a_i)X\g, \r(a_i) Y \g) = (X\g, Y \g,)$$
since $T$ is closed it follows that $X\g $ is in dom$(T)$ and $TX\g = Y \g.$ 
The lemma follows.  
\end{proof}

  \section{Equivalence of unbounded Stinespring representations } 
We will now work with  the definitions and results of the previous section  to show that two minimal unbounded Stinespring representations of a multilinear map have unitarily equivalent *-representations in their decompositions. The proof is a finite induction argument and the induction step consists to a large extent in an application of Lemma \ref{lemma}. This general theorem may then be applied to the case of two minimal Stinespring representations of one completely bounded map. On the other hand the general unbounded case is filled with small arguments concerning domains of definition for the unbounded operators in the proof. We have chosen to present the completely bounded one variable case first, to make the proof of the general unbounded multilinear case easier to follow. It also turns out that the completely bounded one variable  case has a proof which is nearly identical to the proof of the induction step in the general unbounded multilinear case, so we will save a bit in the proof of the general theorem.    

 \begin{theorem} \label{theorem1}
 Let $\ca$ be a C*-algebra,  $\hg, \hh, \hk , \hl,$   Hilbert spaces and   $ \f: \ca \to  B(\hg, \hh)$  a completely bounded map. Let 
$
\{\hk, \pi, X_0, X_1 \}$  and $ \{\hl, \r, Y_0, Y_1\} $ 
be two minimal Stinespring representations of $\f,$ then there exists  a densely defined, closed, injective operator operator $T : \hk\to \hl$ with dense range, such that for the polar decomposition $T =  W|T|$ 
\begin{itemize} \item[(i)] $\forall \xi \in \mathrm{dom}(T)\, \forall a \in \ca:\, \pi(a)\xi \in \mathrm{dom}(T)$ and $T\pi(a)\xi = \r(a)T\xi,$   \item[(ii)] $W$ is an isomtery of $\hk$ onto $\hl,$ and $\forall a \in \ca: \, W\pi(a)W^* = \r(a),$ 
\item[(iii)] $\forall \g \in \hg: X_1\g  \in \mathrm{dom}(T) : TX_1\g  = Y_1\g,$ 
\item[(iv)] 

$\forall \eta \in \hh : Y_0^*\eta \in \mathrm{dom}(T^*) \text{ and } T^*Y_0^* =X^*_0,$ 

$  Y_0T = X_0| \mathrm{dom}(T).$
\end{itemize} 
\end{theorem}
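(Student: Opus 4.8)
The plan is to transport everything to the direct sum $\hk\oplus\hl$ and apply Lemma \ref{lemma} with $A=\ca$, $X=X_1$, $Y=Y_1$ and $\he=\hg$; this is legitimate because $X_1,Y_1$ are bounded and everywhere defined, hence closed with $\hg$ in their domains. First I would record that $\pi$ and $\r$ are non-degenerate: the first (right-hand) minimality conditions say that $\mathrm{span}\{\pi(a)X_1\g\}$ and $\mathrm{span}\{\r(a)Y_1\g\}$ are dense in $\hk$ and $\hl$, and these vectors lie in $\overline{\pi(\ca)\hk}$ and $\overline{\r(\ca)\hl}$, which forces non-degeneracy. Next I would set $\hn_0:=\overline{\mathrm{span}}\{(\pi(a)X_1\g,\r(a)Y_1\g):a\in\ca,\g\in\hg\}$ and let $N$ be the orthogonal projection onto it, with $\s:=\pi\oplus\r$. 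Since $\s(b)(\pi(a)X_1\g,\r(a)Y_1\g)=(\pi(ba)X_1\g,\r(ba)Y_1\g)$, the space $\hn_0$ is $\s(\ca)$-invariant, and as $\s(\ca)$ is a $*$-algebra this gives $N\in\s(\ca)^\prime$; by construction all the vectors demanded by Lemma \ref{lemma} lie in $\hn:=\hn_0$.

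The heart of the argument is verifying the generic position condition (\ref{generic}). Writing $K$ for the projection onto $\hk\oplus 0$, a vector $(\xi,0)$ orthogonal to $\hn_0$ is exactly a vector with $\xi\perp\mathrm{span}\{\pi(a)X_1\g\}$, and a vector $(0,\zeta)$ orthogonal to $\hn_0$ is one with $\zeta\perp\mathrm{span}\{\r(a)Y_1\g\}$; hence $(I-N)\wedge K=0$ and $(I-N)\wedge(I-K)=0$ are immediate from the two right-hand minimality conditions. The remaining two meets are where the coincidence $\f(a)=X_0\pi(a)X_1=Y_0\r(a)Y_1$ must be used. To get $N\wedge(I-K)=0$, suppose $(0,\zeta)\in\hn_0$ and approximate it by finite sums $(u_n,v_n)$ with $u_n=\sum_j\pi(a_j)X_1\g_j\to 0$ and $v_n=\sum_j\r(a_j)Y_1\g_j\to\zeta$; for any $b\in\ca,\ \eta\in\hh$, computing term by term and using $Y_0\r(b^*a_j)Y_1=\f(b^*a_j)=X_0\pi(b^*a_j)X_1$ gives $\langle v_n,\r(b)Y_0^*\eta\rangle=\langle X_0\pi(b^*)u_n,\eta\rangle\to 0$, so $\zeta=0$ by the left-hand minimality condition for $\r$ (density of $\mathrm{span}\{\r(b)Y_0^*\eta\}$). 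The condition $N\wedge K=0$ is symmetric: from $(\xi,0)\in\hn_0$ the same identity yields $\langle u_n,\pi(b)X_0^*\eta\rangle=\langle v_n,\r(b)Y_0^*\eta\rangle\to 0$, and the left-hand minimality condition for $\pi$ forces $\xi=0$.

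With (\ref{generic}) in hand, Lemma \ref{lemma} supplies a closed, densely defined, injective operator $T$ of dense range with $\hn_0=G(T)$, whose conclusions (ii), (iii), (iv) are precisely statements (i), (ii), (iii) of the theorem. It then remains to prove (iv). Here I would first note that $D:=\mathrm{span}\{\pi(a)X_1\g\}$ is a core for $T$, since $\{(\pi(a)X_1\g,\r(a)Y_1\g)\}$ spans a dense subspace of $G(T)=\hn_0$ and $T\pi(a)X_1\g=\r(a)Y_1\g$ on $D$. For $\xi=\pi(a)X_1\g\in D$ and $\eta\in\hh$ one then has $\langle T\xi,Y_0^*\eta\rangle=\langle Y_0\r(a)Y_1\g,\eta\rangle=\langle\f(a)\g,\eta\rangle=\langle X_0\pi(a)X_1\g,\eta\rangle=\langle\xi,X_0^*\eta\rangle$, and passing to the graph-limit over the core extends this to all $\xi\in\mathrm{dom}(T)$, which is exactly $Y_0^*\eta\in\mathrm{dom}(T^*)$ with $T^*Y_0^*\eta=X_0^*\eta$. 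Likewise $Y_0T\xi=X_0\xi$ holds on $D$ and extends by boundedness of $X_0,Y_0$ along the same graph-limit, giving $Y_0T=X_0|\mathrm{dom}(T)$.

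The step I expect to be the main obstacle is the middle paragraph: recognizing that single-valuedness and injectivity of $T$ are not formal consequences of the construction but encode the agreement of the two representations, and that the right pairing for exploiting $\f=X_0\pi X_1=Y_0\r Y_1$ is against the vectors furnished by the two left-hand minimality conditions. Once the four meets are checked, the passage through Lemma \ref{lemma} and the core/graph-limit bookkeeping for (iv) are routine.
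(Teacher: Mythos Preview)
Your proof is correct but organizes the argument a little differently from the paper. The paper introduces \emph{two} closed $\s(\ca)$-invariant subspaces of $\hk\oplus\hl$: your $\hn_0$ (there called $\hr$) and the larger space $\hn:=\bigcap_{a,\eta}(-\pi(a)X_0^*\eta,\r(a)Y_0^*\eta)^\perp$. The inclusion $\hr\subseteq\hn$ is precisely the identity $X_0\pi X_1=\f=Y_0\r Y_1$, and the paper then takes $G(T)=\hn$ rather than $\hr$. With this choice the conditions $N\wedge K=0$ and $N\wedge(I-K)=0$ follow immediately from the \emph{definition} of $\hn$ together with the left-hand minimality assumptions, while the remaining two meets come from $I-N\leq I-R$ and right-hand minimality; no limiting argument is needed. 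Moreover, the definition of $\hn$ instantly yields $\langle\xi,\pi(a)X_0^*\eta\rangle=\langle T\xi,\r(a)Y_0^*\eta\rangle$ for every $\xi\in\mathrm{dom}(T)$, so item (iv) drops out after a Kaplansky density step. Your single-subspace route instead handles the harder pair of meets by approximation, and then recovers (iv) via the core property of $D$ and the boundedness of $X_0,Y_0$. Both are valid and produce (possibly different) operators $T$ satisfying the theorem; the paper's two-subspace device trades the approximation and core bookkeeping for one extra definition, and it generalizes more cleanly to the unbounded multilinear Theorem \ref{theorem2}, where the endpoint operators are no longer bounded.
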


\begin{proof}
Define $\hr$ as the closed linear span inside $\hk \oplus \hl$ of the vectors in the set $\{(\pi(a)X_1 \g, \r(a)Y_1\g)\, : \, a \in \ca, \, \g \in \hg\},$ and let $\hn $ denote the closed linear subspace  of $\hk \oplus \hl$ defined as an  intersection of perpendicular complements 
$$\hn := \underset{ a \in \ca,\, \eta \in \hh }{\cap} (- \pi(a) X_0^*\eta, \r(a)Y^*_0\eta)^\perp.$$ Since we look at two Stinespring representations of $\f,$
it turns out that $\hr$ is a subspace of $\hn,$ then when we  define $R$ and $N$ as the orthogonal projections onto respectively $\hr$ and $\hn$ we get $N \geq R.$ As in the lemma above we let $\s$ denote the representation of $\ca $ on $ \hk \oplus \hl$ given as $ \s := \pi \oplus \r.$ By construction both $\hn$ and $\hr$ are invariant under $\s(\ca),$ so both $R$ and $N$ belong to the commutant $\s(\ca)^\prime.$  We will now apply  Lemma \ref{lemma} so we define $K, \, L$ as the orthogonal projections from $ \hk \oplus \hl$ onto $\hk$ and $\hl$ respectively. 

The minimality condition, span$(\{\pi(a)X_0^*\eta\, :\, a \in \ca, \, \eta \in \hh\}) $ is dense in $\hk,$ implies that $K\wedge N =0.$  

The minimality condition, span$(\{\r(a)Y_0^*\eta \, :\, a \in \ca, \, \eta \in \hh\}) $ is dense in $\hl,$ implies that $(I-K) \wedge N =  L\wedge N =0.$  

The minimality condition, span$(\{\pi(a)X_1\g \, :\, a \in \ca, \, \g  \in \hg\}) $ is dense in $\hk,$ implies that $K\wedge(I- R) =0,$ and since $I-N \leq I-R$ we have $K\wedge (I-N) =0.$  

The minimality condition, span$(\{\r(a)Y_1\g \, :\, a \in \ca, \, \g  \in \hg\}) $ is dense in $\hl,$ implies that $L\wedge(I- R) =0,$ and since $I-N \leq I-R$ we have $(I-K) \wedge (I-N) = L\wedge (I-N) =0.$  

Then $N$ and $K$ are in the generic position,described in the proof of Lemma \ref{lemma}, so that lemma  implies that there exists a densely defined closed operator $T: \hk \to \hl$ such that the conditions (i), (ii) and (iii) of this theorem are fulfilled. 

With respect to the statement (iv), we remark that by the  definition of $\hn$ and $T$  we have $$ \forall \xi \in \mathrm{dom}(T) \, \forall a \in \ca\, \forall \eta \in \hh: \,\,  \langle\xi, \pi(a)X_0^* \eta \rangle \, = \, \langle T\xi , \r(a)Y_0^*\eta \rangle,$$
hence 
$$ \forall a \in \ca\, \forall \eta \in \hh:  \,\,  \r(a) Y^*_0\eta  \in \mathrm{dom}(T^*) \text{ and } T^*\r(a)Y^*_0\eta = \pi(a)X^*_0\eta$$ Just as in the proof of (iv) in Lemma \ref{lemma} we remark that $T^*$ is a closed operator and Kaplansky's density theorem may then be applied again to show $$  \forall \eta \in \hh:  \,\,   Y^*_0\eta  \in \mathrm{dom}(T^*) \text{ and } T^*Y^*_0\eta = X^*_0\eta,$$
then $T^*Y^*_0  = X_0^*$ and since in general for closed densely defined operators $A, B $ with $AB$ densely defined  we have $B^*A^* \subseteq (AB)^*$ we get $Y_0T = X_0 | \mathrm{dom}(T), $ 
and the theorem follows.
\end{proof}

We will now turn  to the general case where we consider a multilinear map with two {\em  minimal  unbounded } Stinespring representations. 

 \begin{theorem} \label{theorem2}
 Let $k$ in $\bn$, $ A_1, \dots A_k $ *-algebras,  $\hg, \hh, $   Hilbert spaces, $\he$ a dense linear subspace of $\hg,$ $\hf$ a dense linear subspace of $\hh$ and   $ \Phi : A_1\times \dots \times A_k \to OP(\he,\hh)$ a $k-$linear map. Let 
\begin{align*}
&\{\hk_1, \dots, \hk_k, \pi_1, \dots, \pi_k , X_0, \dots, X_k, \he, \hf\} \text{ and } \\ &
\{\hl_1, \dots, \hl_k, \r_1, \dots, \r_k , Y_0, \dots, Y_k, \he, \hf \} \end{align*}
be two minimal unbounded  Stinespring representations of $\Phi.$ Define for $1\leq i\leq k$ subspaces $\he_i $ of $\mathrm{dom}(X_{(i-1)}) \subseteq \hk_i$ as  the linear span of all vectors of the form 
$$ \pi_i(a_i)X_i\dots \pi_k(a_k)X_k\g \, \, \text{ such that  } a_j \in A_j, \g \in \he.$$   There exists for each $i $ in $\{1,  \dots ,k\} $ a densely defined, closed, injective  operator $T_i : \hk_i \to \hl_i$ with dense range such that for the polar decomposition $T_i =  W_i|T_i|$ 
\begin{itemize} \item[(i)] $\forall \xi \in \mathrm{dom}(T_i)\, \forall a_i \in A_i:\, \pi_i(a_i)\xi \in \mathrm{dom}(T_i)$ and $ \newline T_i\pi_i(a_i)\xi = \r_i(a_i)T_i\xi,$   
\item[(ii)] $W_i$ is an isometry of $\hk_i$ onto $\hl_i,$ and  $ \newline \forall a_i \in A_i: \, W_i\pi_i(a_i)W_i^* = \r_i(a_i),$ 
\item[(iii)] $\forall \g \in \he: X_k\g  \in \mathrm{dom}(T_k) : T_kX_k\g  = Y_k\g,$ 
\item[(iv)] $ \forall i \in \{1, \dots, k-1\} : \newline  \forall \xi \in \he_{(i+1)}: \xi \in \mathrm{dom}(T_{(i+1)}) \text{ and }  T_{(i+1)}\xi \in \mathrm{dom}(Y_i) , \newline   X_i\xi \in  \mathrm{dom}(T_i),  \text{ and }   T_i  X_i\xi = Y_iT_{(i+1)}\xi $
\item[(v)] $\forall \xi \in \he_1:\, T_1\xi \in \mathrm{dom}(Y_0)\text{ and } X_0\xi =   Y_0T_1\xi.$ $\newline \forall \eta \in \hf :\,       Y_0^*\eta \in \mathrm{dom}(T_1^*) \text{ and } T_1^*Y_0^* \eta = X_0^* \eta.$   
\end{itemize} 
\end{theorem}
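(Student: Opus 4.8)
The plan is to construct $T_k, T_{k-1}, \dots, T_1$ by a finite downward induction on $i$, each step being an application of Lemma \ref{lemma}. First observe that the left-hand density in (\ref{MinCond}) forces $\pi_i(A_i)\hk_i$ and $\r_i(A_i)\hl_i$ to be dense, so every $\pi_i,\r_i$ is non-degenerate and the lemma is applicable. Write $\he_i'$ for the $Y$-side analogue of $\he_i$, namely the span of the vectors $\r_i(a_i)Y_i\cdots\r_k(a_k)Y_k\g$. For the base case $i=k$ I would run the argument of Theorem \ref{theorem1} with $A=A_k$, $\pi=\pi_k$, $\r=\r_k$, source $\hg$ with dense subspace $\he$, and $X=X_k$, $Y=Y_k$: set $\hr_k$ to be the closed span of $\{(\pi_k(a)X_k\g,\r_k(a)Y_k\g)\}$ and let $\hn_k$ be the intersection, over the chain variables and $\eta\in\hf$, of the perpendicular complements of $(-\pi_k(a)X_{k-1}^*\cdots X_0^*\eta,\ \r_k(a)Y_{k-1}^*\cdots Y_0^*\eta)$, the adjoint suffixes being defined on $\hf$ by the $OP(\hf,\hg)$ clause in the definition. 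Since both representations evaluate $\Phi$, telescoping the adjoint chains gives $\hr_k\subseteq\hn_k$, and the four clauses of (\ref{MinCond}) produce the four relations (\ref{generic}) for $N_k,K_k$; Lemma \ref{lemma} then yields $T_k$ with (i), (ii) of the theorem and with clause (iv) of the lemma, which is exactly (iii) of the theorem.

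For the induction step, assume $T_{i+1},\dots,T_k$ are built. The first task is the \emph{bridging identity} $\he_{i+1}\subseteq\mathrm{dom}(T_{i+1})$ and $T_{i+1}(\he_{i+1})=\he_{i+1}'$, which I would obtain by applying properties (i), (iii), (iv) of the operators already constructed to a generator $\pi_{i+1}(a)X_{i+1}\zeta$ and telescoping down to $T_kX_k\g=Y_k\g$. With this identity I apply Lemma \ref{lemma} with source $\hk_{i+1}$, dense subspace $\he_{i+1}$, representations $\pi_i,\r_i$, and operators $X=X_i$, $Y=Y_iT_{i+1}$ (the identity guarantees $\he_{i+1}\subseteq\mathrm{dom}(X_i)\cap\mathrm{dom}(Y_iT_{i+1})$). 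I let $\hr_i$ be the closed span of the vectors $(\pi_i(a)X_i\xi,\ \r_i(a)Y_iT_{i+1}\xi)$ with $\xi\in\he_{i+1}$, and $\hn_i$ the intersection of the perpendicular complements of $(-\pi_i(a)X_{i-1}^*\cdots X_0^*\eta,\ \r_i(a)Y_{i-1}^*\cdots Y_0^*\eta)$. Because $T_{i+1}(\he_{i+1})=\he_{i+1}'$, the first components of $\hr_i$ span the right-hand minimality set of $\hk_i$ and the second components that of $\hl_i$, while a telescoping of both chains — replacing $T_{i+1}\xi$ by its $Y$-string — shows $\hr_i\subseteq\hn_i$, both sides again evaluating $\Phi$. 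The four clauses of (\ref{MinCond}) give (\ref{generic}) for $N_i,K_i$, and Lemma \ref{lemma} delivers $T_i$: clauses (i), (ii) are (i), (ii) of the theorem, and clause (iv), $T_iX_i\xi=Y_iT_{i+1}\xi$ on $\he_{i+1}$, combined with the bridging identity, is (iv).

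Clause (v) falls out at $i=1$ exactly as clause (iv) of Theorem \ref{theorem1}, since the left chain at the first variable is just $\pi_1(a)X_0^*\eta$. From $G(T_1)=\hn_1$ one reads off, for $\xi\in\mathrm{dom}(T_1)$,
\[
\langle \xi, \pi_1(a)X_0^*\eta\rangle = \langle T_1\xi, \r_1(a)Y_0^*\eta\rangle,
\]
which says $\r_1(a)Y_0^*\eta\in\mathrm{dom}(T_1^*)$ with $T_1^*\r_1(a)Y_0^*\eta=\pi_1(a)X_0^*\eta$; a Kaplansky-density passage to the identity (as in Lemma \ref{lemma}(iv)) gives $Y_0^*\eta\in\mathrm{dom}(T_1^*)$ and $T_1^*Y_0^*\eta=X_0^*\eta$, and dualizing via $B^*A^*\subseteq(AB)^*$ yields $X_0\xi=Y_0T_1\xi$ on $\he_1\subseteq\mathrm{dom}(T_1)$.

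The hard part is the domain bookkeeping in the induction step rather than any single deep estimate. The operator $Y=Y_iT_{i+1}$ is a product of two unbounded operators and need not be closed or even closable, so Lemma \ref{lemma} cannot be quoted with $Y$ verbatim; the point to verify is that its proof uses $Y$ only through the values $Y\g$ for $\g\in\he$ (closedness enters only through $T$, never through $X$ or $Y$), so the conclusion survives once $\he_{i+1}\subseteq\mathrm{dom}(Y_iT_{i+1})$ is in place. The remaining care is in tracking which vectors lie in $\mathrm{dom}(T_{i+1})$, $\mathrm{dom}(Y_i)$ and $\mathrm{dom}(T_i)$ and in checking that each telescoping of an adjoint chain is legitimate on the dense domains involved — precisely the kind of small domain arguments flagged before Proposition \ref{MinStUb}.
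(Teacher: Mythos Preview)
Your approach is correct and close to the paper's, but organized differently. The paper constructs all the $T_i$ simultaneously at the outset: it defines $\hr_i$ and $\hn_i$ directly from the original data of the two representations of $\Phi$, so that (i), (ii) and the inclusion $\he_i\subseteq\mathrm{dom}(T_i)$ with $T_i(\he_i)=\he_i'$ (your ``bridging identity'') follow at once from $\hr_i\subseteq G(T_i)$, with no inductive telescoping needed. Only afterwards does the paper run a downward reduction, passing from $\Phi$ to a $(k-1)$-linear map $\Phi_{k-1}$ on $\he_k$ equipped with two new minimal unbounded Stinespring representations (the second having last operator $(Y_{k-1}T_k|\he_k)^{**}$); it verifies closability of $Y_{k-1}T_k|\he_k$ and minimality of the reduced data, observes that the spaces $\hr_{k-1},\hn_{k-1}$ and hence $T_{k-1}$ are unchanged, and then reads off item (iv) for $i=k-1$ as item (iii) for $\Phi_{k-1}$. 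Your route folds these two stages into a single induction and sidesteps the closability and minimality checks by observing that the proof of Lemma~\ref{lemma} never uses closedness of $X$ or $Y$, only of $T$---a valid and slightly leaner shortcut. The paper's separation, in return, makes explicit that each $T_i$ is canonical (depending only on the given data, not on any inductive choices) and yields the closability of each $Y_iT_{i+1}|\he_{i+1}$ as a byproduct. Your spaces $\hr_i,\hn_i$ coincide with the paper's once the bridging identity is in hand, so the two constructions produce the same operators $T_i$.
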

 \begin{proof}
 The proof is a kind of a finite induction argument where we reduce the number of variables by one in each step until we end up proving item (v) in the theorem. On the other hand the operators $T_i$ which appear in the result only  depend on the 2 given unbounded Stinespring representations, and not on  the process of reducing  the number of variables,  so we will construct the $T_i\,'$s before the reduction in the number of variables begin. We proceed as in the proof of Theorem \ref{theorem1},  so $\hr_i $ is defined as the closed linear subspace of $\hk_i \oplus \hl_i$ spanned by all the vectors of the form $(\pi_i(a_i)X_i \dots \pi_k(a_k)X_k \g , \r(a_i)Y_i \dots \r_k(a_k)Y_k\g )$ with $\g$ in $\he$ and  $(a_i , \dots, a_k) $ in $A_i \times \dots \times A_k.$  Then $\cn_i$ is defined as the closed linear subspace of $\hk_i \oplus \hl_i$ which is the intersection of all the  perpendicular complements 
 $$ (- \pi_i(a_i) X_{(i-1)}^* \dots \pi_1(a_1)X_0^* \eta, \r(a_i) Y_{(i-1)}^* \dots \r_1(a_1)Y_0^*\eta )^\perp,$$
 with $ (a_1, \dots, a_i) \in A_1 \times \dots \times A_i$ and $ \eta \in \hf.$ 
 As before we define orthogonal projections $R_i \leq N_i$ in the commutant $\s(A_i)^\prime$ as the orthogonal projections with ranges $\hr_i $ and $\hn_i,$ respectively. The minimality conditions on the two unbounded Stinespring representations do - as in the proof of Theorem \ref{theorem1} - imply that the projections $K_i$ and $N_i$ are in the generic position, so for each $i$ there exists a  closed densely defined injective operator $T_i$ with dense range   from $\hk_i$ to $\hl_i$ such that the items (i) and (ii) of this theorem are established.
 
The relation $\hr_i \subseteq \hn_i = G(T_i)  $ implies the following equation
\begin{align}  \notag
&\forall i \in \{1, \dots, k\} \forall \g \in \he,\, \forall (a_i, \dots, a _k) \in A_1 \times \dots \times A_k: \\ \notag &\pi_i(a_i) X_i \dots \pi_k(a_k)X_k \g \in \mathrm{dom}(T_i) \text{ and } \\ \label{TiRel}  &
T_i \pi_i(a_i) X_i \dots \pi_k(a_k)X_k \g \, = \, \r_i(a_i) Y_i \dots \r_k(a_k)Y_k \g, \\ & \notag \he_i \subseteq \mathrm{dom}(T_i).
\end{align}

 For $i =k $ the equation (\ref{TiRel}) becomes 
 \begin{equation} \label{iiia} \forall \g \in \he\, \forall a_k \in A_k: \, \pi_k(a_k)X_k \g \in \mathrm{dom}(T_k
) , \,\,  T_k\pi_k(a_k)X_k \g = \r_k(a_k)Y_k\g.\end{equation}   Again, as in the proof of (iii) in Theorem \ref{theorem1},  the item (ii) of this theorem, the closedness of $T_k$  and Kaplansky's density theorem show, when applied to (\ref{iiia}), that item (iii) is valid. 

We can now start the reduction in the number of variables and we reduce from the right. By the proven items (i), (ii) and (iii)  we know that 
\begin{align}
\notag \forall \g \in \he \, \forall (a_1 , \dots , a_k ) &\in A_1 \times \dots A_k : \\ \notag X_0\pi_1(a_1)X_1 \dots \pi_k(a_k)X_k \g &=  Y_0\r_1(a_1)Y_1 \dots Y_{(k-1)}\r_k(a_k) Y_k \g \\ \notag \text{ by (iii) } & =   Y_0\r_1(a_1)Y_1 \dots Y_{(k-1)} \r_k(a_k)T_kX_k\g \\ 
\label{Yk-1Tk}  \text{ by (i) } & =   Y_0\r_1(a_1)Y_1 \dots Y_{(k-1)}T_k  \pi_k(a_k)X_k\g \\ \notag  \text{ so } &\\ \label{reduc}\forall \xi  \in \he_k \,\, \forall (a_1 , \dots , a_{(k-1)} ) &\in A_1 \times \dots A_{(k-1)}  :  \\  \notag  
X_0 \pi(a_1)\dots \pi_{(k-1)}(a_{(k-1)})X_{(k-1)}  \xi &=  Y_0\r(a_1)\dots  \r_{(k-1)}(a_{(k-1)}) Y_{(k-1)}T_k \xi.  
\end{align}

This equation shows that we may define a $(k-1)$ linear map, say $\Phi_{(k-1)}$ of $A_1 \times \dots \times A_{(k-1)} $ into $OP(\he_k, \hh)$ which is given as 
\begin{align}  \notag
\forall \xi \in \he_k: \, \Phi_{(k-1)} (a_1 ,\dots , a_{(k-1)})\xi  \, :&=  
X_0 \pi(a_1)\dots \pi_{(k-1)}(a_{(k-1)})X_{(k-1)}  \xi \\ \label{Fik-1} &=  Y_0\r(a_1)\dots  \r_{(k-1)}(a_{(k-1)}) Y_{(k-1)}T_k \xi.  
\end{align}

The equation {\em almost }  shows that $\Phi_{(k-1)}$ has two unbounded Stinespring representations, the only thing we are missing is to show that $Y_{k-1)}T_k|\he_k$ is closable. On the other hand for $\xi $ in $\he_k,$  $\eta $ in $\hf$ and $(a_1 , \dots, a_{(k-1)}) $ in $A_1 \times \dots \times A_{(k-1)} $
\begin{align*}
&\langle Y_{(k-1)}T_k \xi , \r_{(k-1)}(a_{(k-1)}^*)Y_{(k-2)}^* \dots Y_0^*\eta \rangle\\ &= \langle \Phi_{(k-1)}( a_1, \dots , a_{(k-1)}) \xi, \eta \rangle\\ 
&= \langle \xi, X_{(k-1)}^* \pi_{(k-1)}(a_{(k-1)}^*) \dots X_0^* \eta \rangle. 
\end{align*}
 This shows that the densely defined operator $(Y_{(k-1)}T_k| \he_k)$ has an  adjoint operator which is defined on the dense subspace \newline
 span($\{  \r_{(k-1)}(a_{(k-1)}^*)Y_{(k-2)}^* \dots Y_0^*\eta \, : \, (a_1, \dots , a_{(k-1)} ) \in A_1 \times A_{(k-1)}, \eta \in \hf \})$ with 
 \begin{align} \label{Yk-1Tk*} 
 &(Y_{(k-1)}T_k|\he_k)^* \r_{(k-1)}(a_{(k-1)}^*)Y_{(k-2)}^* \dots Y_0^*\eta \\ \notag& =X_{(k-1)}^* \pi_{(k-1)}(a_{(k-1)}^*) \dots X_0^* \eta.
\end{align}   We have then obtained 2 unbounded Stinespring representations for $\Phi_{(k-1)} ,$ 

\begin{align*}
&\{\hk_1, \dots, \hk_{(k-1)}, \pi_1, \dots, \pi_{(k-1)} , X_0, \dots, X_{(k-1)}, \he_k , \hf\} \text{ and } \\  &
\{\hl_1, \dots, \hl_{(k-1)}, \r_1, \dots, \r_{(k-1)} , Y_0, \dots, (Y_{(k-1)}T_k|\he_k)^{**}, \he_k , \hf \}, 
\end{align*}
In order to show that these unbounded Stinespring representations are minimal we have to show that the minimality conditions in (\ref{MinCond}) are satisfied. The conditions to the left i.e. involving $X_{(i-1)}^*$ and $Y_{(i-1)}^* $ are the same as the ones for the given two minimal Stinespring representations of $\Phi$ except that $i \leq (k-1)$ now.  

With respect to the conditions  to the right we will discuss the ones involving the $X_j$'s first. So let $i = k-1$ then by the definition of $\he_{(k-1)}$ 
\begin{equation} \label{MinkL}
\mathrm{span}(\{\pi_{(k-1)}(a_{(k-1)})X_{(k-1)} \xi\, : \, a_{(k-1)} \in A_{(k-1)}, \, \xi \in \he_k \} \, = \, \he_{(k-1)} .
\end{equation}
Hence the conditions to the left for the $X_i$ representation of $\Phi_{(k-1)} $ follows from the assumptions on the $X_i$ representation of $\Phi.$ 

The investigation of minimality conditions to the left for  the $Y_i$ representation of $\Phi_{(k-1)}$ starts with the following equation
\begin{align} \label{MinkR} 
\mathrm{span}(\{\r_{(k-1)}&(a_{(k-1)})Y_{(k-1)}T_k\xi\, : \\&\notag a_{(k-1)} \in A_{(k-1)}, \, \xi \in \he_k \} \, \\  = \notag \, \mathrm{span}(\{\r_{(k-1)}&(a_{(k-1)})Y_{(k-1)}T_k\pi_k(a_k)X_k\g \, : \\& \notag a_{(k-1)} \in A_{(k-1)}, a_k \in A_k, \g \in \he \}  
\\ \notag &\text{ by (i) } \\ \notag  =   \mathrm{span}(\{\r_{(k-1)}&(a_{(k-1)})Y_{(k-1)}\r_k(a_k)T_kX_k \g \, : \\ & \notag a_{(k-1)} \in A_{(k-1)}, a_k \in A_k, \g \in \he \}
\\ \notag 
&\text{ by (iii) } \\ \notag  =   \mathrm{span}(\{\r_{(k-1)}&(a_{(k-1)})Y_{(k-1)}\r_k(a_k)Y_k \g \, : \\ & \notag \, a_{(k-1)} \in A_{(k-1)}, a_k \in A_k, \g \in \he \}.
\end{align}
The last span is exactly the one used in the last  minimality  conditions for  the representation of $\Phi$  based on the $Y_i$'s and the following ones for $\Phi$ are then exactly the ones needed to show that the $Y_i$ representation for $\Phi_{(k-1)}$ is minimal. 

We will now establish the validity of the first line of item (iv) in the case $i = k-1.$ The equation (\ref{TiRel}) shows that $\he_k \subseteq \mathrm{dom}(T_k),$ and the computations leading to equation (\ref{Yk-1Tk}) show that for $\xi$ in $\he_k$ we have $T_k\xi$ in dom$(Y_{(k-1)}).$  

To prove the second line of item (iv) in the case $i = (k-1)$ we first remark that the identities established in (\ref{MinkL}) and (\ref{MinkR}) imply that the relations defining $\hr_{(k-1)} $ and $\hn_{(k-1)}$ will be unchanged when we move from $ \Phi $ to $\Phi_{(k-1)}$  so the operator $T_{(k-1)} $ will be the same in both the $\Phi$ and the $\Phi_{(k-1)}$ case, and from the  
statement (iii) in the theorem, but applied to the $\Phi_{(k-1)} $  case we get 
\begin{align} & \notag \forall \xi \in \he_k\, : \, X_{(k-1)} \xi \in \mathrm{dom}(T_{(k-1)}) \text{ and } T_{(k-1)} X_{(k-1)} \xi = Y_{(k-1)} T_k\xi,
\end{align} 
which is the second line in the item (iv) for $i = (k-1).$  

We can then replace $k$ by $(k-1)$ and $\he$ by $\he_k,$ and so on and continue to reduce from $(k-1)$ to $(k-2)$ variables until we have obtained the one variable case. 

\begin{align} \notag
\forall a_1 \in A _1 \forall \xi  \in \he_2: \xi \in \mathrm{dom}(T_2), \,  X_1\xi &\in \mathrm{dom} (T_1),\,  T_2\xi \in \mathrm{dom}(Y_1) \text{ and }
\\ \text{ by (iii) for } \Phi_2 \quad 
T_1X_1\xi \, &= \, Y_1T_2\xi \label{TX1} \\ \notag  \text{ by } (\ref{Fik-1}) \text{ for } \Phi_1\quad  X_0  \pi_1(a_1)X_1\xi \,& = \,Y_0\r_1(a_1)Y_1T_2\xi\\ \notag \text{ by (\ref{TX1}) } \, &= \, Y_0\r_1(a_1)T_1X_1\xi
\\ \notag \text{ by item (i) } \, &=\, Y_0T_1\pi_1(a_1)X_1\xi,\end{align} and then \begin{equation} \forall \xi \in \he_1: \, X_0\xi \, = \, Y_0T_1\xi.
\end{equation} 
The first part of item (v) is established, and for the second we remark that by the definition of $\hn_1$ we have the following identity 
\begin{align*}
\forall \xi \in \mathrm{dom}(T_1)\, \forall \eta \in \hf\,  \forall a_1 \in A_1 & : \\
\langle \xi, \, \pi_1(a_1)X^*_0 \eta \rangle \,&=\, \langle T_1 \xi , \, \r_1(a_1)Y_0^* \eta \rangle.
\end{align*}
Then $\r_1(a_1)Y_0^* \eta $ is in dom$(T_1^*)$ and $T_1^*\r_1(a_1)Y_0^*\eta = \pi_1(a_1)X_0^*\eta.$ Since $T_1^*$ is a closed operator we may repeat the argument given in the proof of item (iii) of Theorem \ref{theorem1}  to extend the equality just proven to get $$ \forall \eta 
\in \hf : Y^*_0\eta \in \mathrm{dom}(T_1^*) \text{ and } T_1^* Y^*_0\eta = X^*_0 \eta,$$ and the theorem follows.

 \end{proof}


\begin{thebibliography}{10}

\bibitem{Ar} W. B. Arveson, {\em Subalgebras of  C*-algebras, }
Acta Math. {\bf 123 } (1969), 141 -- 224.
\bibitem{C}
E. Christensen,  {\em  On non self-adjoint representations of C*-algebras } Amer. J. Math. {\bf 103 } (1981), 817--833. 


\bibitem{CS1}
E. Christensen, A. M. Sinclair, {\em Representations of completely bounded multilinear operators,} J. Funct. Anal. {\bf 72 } (1987), 151 -- 181.  

\bibitem{Co} A. Connes, {\em Noncommuttive geometry, } Elsevier, 1994.

\bibitem{Ha} U. Haagerup, {\em Decomposition of completely bounded maps on operator algebras,} Manuscript. 


\bibitem{Hal} P. R. Halmos, {\em Two subspaces, }
Trans. Amer. Math. Soc. {\bf 144} (1969), 381--389 

\bibitem{KR} R. V. Kadison, J. R. Ringrose, {\em Fundamentals of the theory of operator algebras,} Graduate Studies in Math., Springer 1997.


\bibitem{Pa1} V. I. Paulsen, {\em Completely bounded maps on C*-algebras and invariant operator ranges ,} Proc. Amer. Math. Soc. {\bf 86 } (1982), 91 -- 96.

\bibitem{Pa2} V. I.  Paulsen, {\em Completely bounded maps and operator algebras,} Cambridge Univ. Press, Cambridge,
2002.

\bibitem{PS}
V. I. Paulsen, R. R. Smith, {\em Multilinear maps and tensor norms on operator systems,}
J. Funct. Anal. {\bf 73} (1987), 258--276. 

\bibitem{PSu} V. I. Paulsen, C. Y. Suen, {\em Commutant representations of completely bounded maps,} J. Operator Theory {\bf 13} (1985), 87 --101. 

\bibitem{St} W. F. Stinespring, {\em
Positive functions on C*-algebras,} 
Proc. Amer. Math. Soc. {\bf 6} (1955), 211--216.  


\bibitem{Wi} G. Wittstock, {\em Ein operatorwertiger Hahn-Banach Satz } J. Funct. Anal. {\bf 40} (1981), 127 -- 150. 
\end{thebibliography}
\end{document}